\newcommand{\fenv}[1]%
{\ensuremath{\,\overrightarrow{\operatorname{env}}_{#1}}}
\newcommand{\benv}[1]%
{\ensuremath{\,\overleftarrow{\operatorname{env}}_{#1}}}
\theoremstyle{thmstyleone}%
\theoremstyle{thmstyletwo}%
\theoremstyle{thmstylethree}%
\newtheorem{theorem}{Theorem}
\newtheorem{lemma}[theorem]{Lemma}
\newtheorem{proposition}[theorem]{Proposition}%
\newtheorem{definition}{Definition}%
\newtheorem{remark}{Remark}%
\newtheorem{alg}{Algorithm}[section]
\theoremstyle{definition}
\newcommand{\argmin}{{\rm arg}\!\min}
\newcommand{\la}{\langle}
\newcommand{\ra}{\rangle}
\newcommand{\Fix}{{\rm\textbf{Fix}\,}}
\newcommand{\nexto}{\kern -0.54em}
\newcommand{\dZ}{{\cal Z \kern -0.7em Z}}
\newcommand{\dC}{{\rm\hbox{C \kern-0.8em\raise0.2ex\hbox{\vrule height5.4pt width0.7pt}}}}
\newcommand{\dQ}{{\rm\hbox{Q \kern-0.85em\raise0.25ex\hbox{\vrule height5.4pt width0.7pt}}}}
\newcommand{\HH}{\mathcal{H}}
\newcommand{\KK}{\mathcal{K}}
\newcommand{\RR}{\mathbb{R}}
\newcommand{\ZZ}{\mathbb{Z}}
\date{}
\begin{document}
\title{Continuous and Discrete Systems for Quasi Variational Inequalities with Application to Game Theory}
\author{Oday Hazaimah
\footnote{E-mail: {\tt odayh982@yahoo.com}. https://orcid.org/0009-0000-8984-2500.  St. Louis, MO, USA}}
%

\maketitle

\begin{abstract}
A new class of projected dynamical systems of third order is investigated for quasi (parametric) variational inequalities in which the convex set in the classical variational inequality also depends upon the solution explicitly or implicitly. We study the stability of a continuous method of a gradient type. Some iterative implicit and explicit schemes are suggested as counterparts of the continuous case by inertial proximal methods. The convergence analysis of these proposed methods is established under sufficient mild conditions.  Moreover, some applications dealing with the generalized Nash equilibrium problems are presented.
\\

\noindent {\textbf{Keywords}:} Quasi-variational Inequalities; Projected Dynamical Systems; Discretization, Generalized Nash Equilibrium Games.
\medskip 

\noindent \textbf{\small Mathematics Subject Classification:}{ 34B15, 34B16, 65L05, 65L10, 65L11.}
\end{abstract}

\section{Introduction}
Variational inequalities is a general mathematical framework arising naturally in many theoretical and applied fields, such as finance, engineering, mechanics and operations research \cite{Blum, Cavazzuti, Dupuis, Dong-Zang-Nagurney, Goeleven, Giannessi, Hai, Kinder}. Variational inequalities involving a nonlinear term is called the mixed variational inequality while variational inequalities with a moving constraint set (i.e., the convex set in the variational inequality depends upon the solution) is called the parametric quasivariational inequality (QVI) which was first introduced by Bensoussan et al. \cite{Bensoussan-etal} and it will be our paradigm of study in this work. Note that if the involved set does not depend upon the solution then quasi variational inequality reduces to the Stampacchia variational inequality \cite{Stampacchia}. Variational inequality can be formulated in terms of dynamical systems to study the existence and stability of the solution \cite{Dupuis,Hazaimah-third-mvi,Noor-inequality}. 
It is well known that the solution of the variational inequality exists if either the constraint set is bounded or the corresponding mapping is strongly monotone. In recent years, the study of dynamical systems associated with variational inequalities provides qualitative insights for analyzing complex dynamics and optimizing systems by using methods of resolvent operators and projection operators over a set including; the proximal point algorithm, the gradient projection algorithm, among others (see, for instance, \cite{Alvarez,Antipin-extragrad, Bello-Hazaimah, Mija-prox,Mija-noor,Hai,Noor-inequality}). 
The convergence of the projection methods requires that the operator must be strongly monotone and Lipschitz continuous. 
Dynamical systems theory goes further than finite-dimensional variational inequalities since it allows for the study of the dynamics of equilibrium problems. The equilibrium points of a dynamical system form the solution set of the corresponding variational inequality problem. Unlike the existing extensive literature on variational inequalities, there is not much theory for parametric constraint situations.

Quasi-variational inequalities are known to be very useful for the modelling and analysis of many problems of equilibrium optimization and game theory. 
Solving QVI \eqref{QVI} by means of the optimization reformulation \eqref{QVI-optimization} provides global optimal solutions for general nonconvex objective functions. However, it is not easy to find global minimizers on the feasible set. Unlike in the particular case, the variational inequality, strict monotonicity is given on the objective function under which any equilibrium point of a dynamical system solves the variational inequality. Therefore it would be an interesting subject of future research to develop an optimization reformulation of the QVI that possesses the following property: any equilibrium point is essentially a global optimal solution. Several techniques using the notion of dynamical systems have been studied for approximating QVI, for instance, Khan et al. \cite{Khan-noor} considered second order dynamical system associated with quasi variational inequalities by applying some forward finite difference schemes. Mijajlovic and Jacimovic used proximal method in \cite{Mija-prox}, while the same authors considered continuous methods for solving quasi-variational inequalities in \cite{Mija-cont}. Mijajlovic et al. in \cite{Mija-noor} used gradient-type projection methods for quasi variational inequalities.
As it is known that the variational inequality can be extended to the Nash equilibrium game, generalized Nash equilibrium problems (GNEP) can be formulated in terms of QVIs. This formulation is referred to Bensoussan \cite{Bensoussan-etal} in which each player’s strategy set depends on the rival players strategies. The QVI has recently attracted growing attention in connection to game theory. Moreover, necessary and sufficient conditions for Nash equilibria of a game in normal strategic form cane be constructed in terms of the generated optimization problems.
\medskip

This paper aims at proposing a new continuous-time method of the third order and based on this design we also derive a variety of implicit and explicit discrete-time algorithms for solving parametric quasi-variational inequalities. Inspired by the applications of third-order ODEs which are used to describe and model the motion in electrical circuits \cite{Goeleven}, we adopt this idea in the manuscript. The suggested third-order projected dynamical system technique is very similar to the one used by Hazaimah \cite{Hazaimah-third-mvi} for mixed variational inequalities. It is worth noted that the resolvent dynamical system was proposed for mixed variational inequalities while in this note, the projected dynamical system is proposed for quasi variational inequalities. Moreover, the coefficients of the inertial and damping terms are constants in \cite{Hazaimah-third-mvi} while the coefficients in our case are time-dependent. To the best of our knowledge, this work is the first to use third-order dynamical systems to model QVIs by projection operators. 
Thus, our aim can be summarized as: (i) analysing a continuous gradient-type method with the most applicable form of the moving constraint set. (ii) using finite difference processes to identify the class of QVI by implicit and explicit discretizations for the associated dynamical system represented in terms of projection operators, (iii) discuss the global stability for solutions of the third-order dynamical system, and finally (iv) discuss some applications in the eyes of QVIs. 
\section{Mathematical Preliminaries}
Some mathematical foundations and significant definitions are presented in this section from monotone operators theory, dynamical systems theory, convex analysis and variational inequalities, see \cite{Kinder} for more details. Let $\HH$ be a real Hilbert space equipped with inner product $\la \cdot , \cdot \ra$ and induced norm $\|\cdot\|:=\sqrt{\la\cdot,\cdot\ra}$. Let $T:\HH\rightrightarrows\HH$ be a set-valued map. 
Let $\Fix(T):=\{x\in\HH :x\in T(x)\}$ be the set of all fixed points of the operator $T$.  
We are interested in designing dynamical systems models to derive discrete-time schemes for finding approximate solutions to the quasi variational inequality problem which can be formulated as finding \(x^*\in\KK(x^*)\), such that  
\begin{equation}\label{QVI}
\la T(x^*), \; x-x^*\ra\geq 0 \ , \ \forall \ x\in\KK(x^*)
\end{equation}
where $T$ is a general vector field and continunous map and \(\KK:\HH\rightrightarrows\HH\) is a dynamic constraint set-valued mapping such that \(\KK(x)\subseteq\HH\) is nonempty, closed and convex for all \(x\in\HH\). This parametric quasi variational inequality \eqref{QVI} was studied by Bensoussan et al. \cite{Bensoussan-etal}. If $\KK(x)\equiv\KK$, where \(\KK\) is a closed and convex set in $\HH$ then the parametric variational inequality \eqref{QVI} is equivalent to the classical variational inequality which was studied and considered by Stampacchia \cite{Stampacchia} as follows: find $x^*\in\HH$ such that 
\begin{equation}\label{VI}
\la T(x^*), \; x-x^*\ra\geq 0 \ , \ \forall \ x\in \KK.
\end{equation}
In many applied situations, the moving convex set \(\KK(x)\) has the form 
\begin{equation}\label{moving-convex-set}\KK(x)=k(x)+K_0.
\end{equation}
where $K_0\subseteq\HH$ is a fixed closed convex set and $k(x):\HH\to\HH$ is a continuous function satisfying the Lipschitz property i.e., \(\|k(x)-k(y)\|\leq l\|x-y\|, \ \text{for some positive} \ l>0.\)
Assuming \(x^*\in\Fix (T)\) is a fixed point of the operator \(T\) converges to the solution of the associated QVI \eqref{QVI}, then for a fixed \(x^*\in\HH\) the QVI is precisely a dynamic-constrained optimization problem 
\begin{equation}\label{QVI-optimization}
\min_{x\in\KK(x^*)}\la Tx^*,x-x^*\ra .
\end{equation}
If \(\KK^*(x)=\{x\in\HH:\la x,y\ra\geq 0, \ \forall y\in\KK(x)\}\) is a polar (dual set) of a convex-valued cone $\KK(x)$ in \(\HH\) then the inequality \eqref{VI} is equivalent to finding $x\in\KK$ such that $$x\in\KK(x), \ T(x)\in\KK^*(x) \ \ \text{and} \ \ \la Tx,x\ra =0,$$ 
which is called the quasi complementarity problem \cite{Kinder,Noor-Oettli}.
If the operator $T$ in \eqref{VI} is smooth, then the following well known result holds and can be viewed as a first order necessary optimality condition for minimizing smooth functions.
\begin{theorem} Let $\KK$ be a nonempty, convex and closed subset of $\HH$. Let $T$ be a smooth convex function. Then $x\in\KK$ is the minimum of the smooth convex $T(x)$ if and only if, $x\in\KK$ satisfies $$\la T'(x), y-x\ra\geq 0, \forall y \in\KK $$ where $T'$ is the Frechet derivative of $T$ at $x\in\KK$.
\end{theorem}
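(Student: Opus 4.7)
The statement is the classical first-order variational characterization of a minimizer of a smooth convex function on a closed convex set, so the proof will be a standard two-direction argument exploiting (a) convexity of $\KK$ to form feasible perturbations and (b) convexity of $T$ to get a global lower bound from local first-order information.

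For the ``only if'' direction, the plan is to fix an arbitrary $y\in\KK$ and use convexity of $\KK$ to observe that $x_t:=x+t(y-x)\in\KK$ for every $t\in[0,1]$. Since $x$ minimizes $T$ over $\KK$, we have $T(x_t)-T(x)\geq 0$. Dividing by $t>0$ and letting $t\to 0^+$, the left-hand side converges to the directional derivative $\la T'(x),y-x\ra$ by Fr\'echet differentiability of $T$ at $x$. This yields $\la T'(x),y-x\ra\geq 0$, and since $y\in\KK$ was arbitrary, the variational inequality follows.

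For the ``if'' direction, the plan is to invoke the gradient inequality characterizing smooth convex functions: for all $y\in\HH$,
\begin{equation*}
T(y)\geq T(x)+\la T'(x),y-x\ra .
\end{equation*}
Restricting to $y\in\KK$ and using the assumed inequality $\la T'(x),y-x\ra\geq 0$, one obtains $T(y)\geq T(x)$ for every $y\in\KK$, so $x$ is a minimizer of $T$ on $\KK$.

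The argument is essentially routine; the only delicate point is justifying the passage to the limit $t\to 0^+$ in the first direction, which requires the Fr\'echet (not merely G\^ateaux) differentiability of $T$ to ensure the difference quotient $(T(x+t(y-x))-T(x))/t$ converges to $\la T'(x),y-x\ra$. A minor notational caveat is that the symbol $T$ is used earlier in the paper for the operator in \eqref{QVI}, whereas here $T$ denotes the smooth convex objective and $T'$ its Fr\'echet derivative; this should be kept in mind when reading but does not affect the proof.
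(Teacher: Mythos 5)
Your proof is correct and is the standard two-direction argument; note that the paper itself states this theorem as a ``well known result'' and supplies no proof at all, so there is nothing to compare against in the source. Both directions are sound: convexity of $\KK$ makes $x+t(y-x)$ feasible so the nonnegative difference quotient passes to the limit, and the gradient inequality $T(y)\geq T(x)+\la T'(x),y-x\ra$ closes the converse. One small quibble with your closing remark: the limit of the difference quotient $(T(x+t(y-x))-T(x))/t$ along a fixed direction is exactly the G\^ateaux directional derivative, so G\^ateaux differentiability (which Fr\'echet differentiability implies) already suffices there --- indeed, for a convex $T$ the difference quotient is monotone in $t$, so the one-sided limit exists regardless. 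This does not affect the validity of your argument, only the claim that Fr\'echet differentiability is what makes that step work.
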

This theorem shows that the variational inequalities are analogous to the minimization of the convex differentiable functional subject to certain constraint which has led to study a more general framework of variational inequalities applied to nonconstrained and nonsmooth optimization problems. In the following, we state some useful definitions and properties for several kinds of monotone maps followed by well-known facts on projection operators and quasi variational inequalities. 
\begin{definition}\label{monotonedefi} The nonlinear operator $T:\HH\to\HH$, is said to be:
\begin{itemize}
\item [(i)] Monotone, if
$$\langle T(x)-T(y), x-y\rangle \geq 0,\ \ \forall x, y \in\HH.$$ 
\item [(ii)] Strictly monotone if the above inequality is strict for all $x\not= y$ in $\HH.$
\item [(iii)] Strongly monotone if there exists a modulus $\mu > 0$ such that
$$\langle T(x)-T(y), x-y\rangle\geq\mu\|x-y\|^2 , \ \ \forall x, y \in\HH.$$
\end{itemize}
\end{definition}
Notice that the implication $(iii)\implies (i)$ holds, whereas the converse need not be true generally, meaning that monotonicity is a weaker property than strongly monotonicity. 
\begin{definition}
The operator $T:\HH\to\HH$ is called Lipschitz continuous or $L$-Lipschitz if there exists some nonnegative $L\geq 0$, such that $$\|Tx-Ty\|\le L\|x-y\|\ ,\quad\forall x,y\in\HH.$$   
\end{definition}
\begin{lemma}\label{nec-suff-proj}
For a given $z\in\HH$. The necessary and sufficient characterizations of the projection are:
$$\Pi_{\KK(x)}(z)\in\KK(x),$$ if and only if 
$$\la\Pi_{\KK(x)}(z)-z, w-\Pi_{\KK(x)}(z)\ra\geq 0,\ \forall w\in\KK(x).$$
or equivalently, $\la z-\Pi_{\KK(x)}(z), w-\Pi_{\KK(x)}(z)\ra\leq 0,\ \forall w\in\KK(x),$
\end{lemma}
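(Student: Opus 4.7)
The plan is to establish the standard variational characterization of the projection onto the closed convex set $\KK(x)$, proving both implications for the inequality $\langle\Pi_{\KK(x)}(z)-z, w-\Pi_{\KK(x)}(z)\rangle\geq 0$. The second stated form is just a sign-flip of the first, so I only need to handle one of them.

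First I would recall that since $\KK(x)$ is nonempty, closed and convex in the Hilbert space $\HH$, the metric projection $\Pi_{\KK(x)}(z)$ is well-defined as the unique minimizer $p\in\KK(x)$ of $w\mapsto\tfrac12\|w-z\|^2$. With this setup, the equivalence to be shown is a first-order optimality condition for this minimization problem.

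For the necessity ($\Rightarrow$) direction, let $p:=\Pi_{\KK(x)}(z)$ and pick any $w\in\KK(x)$. By convexity of $\KK(x)$, the segment point $p_t:=p+t(w-p)\in\KK(x)$ for every $t\in(0,1]$. The minimality of $p$ gives $\|z-p\|^2\leq\|z-p_t\|^2$, and expanding the right-hand side yields
\[
0\leq -2t\langle z-p,w-p\rangle + t^2\|w-p\|^2.
\]
Dividing by $t>0$ and letting $t\to 0^+$ produces $\langle z-p,w-p\rangle\leq 0$, which rearranges to the claimed inequality.

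For the sufficiency ($\Leftarrow$) direction, suppose $p\in\KK(x)$ satisfies $\langle p-z,w-p\rangle\geq 0$ for every $w\in\KK(x)$. For any such $w$, I would expand
\[
\|w-z\|^2 = \|w-p\|^2 + 2\langle w-p,p-z\rangle + \|p-z\|^2 \geq \|p-z\|^2,
\]
where the nonnegative cross term comes directly from the hypothesis. Hence $p$ minimizes $\|\cdot-z\|$ over $\KK(x)$ and thus equals $\Pi_{\KK(x)}(z)$. The equivalent form $\langle z-\Pi_{\KK(x)}(z),w-\Pi_{\KK(x)}(z)\rangle\leq 0$ follows by multiplying the inequality by $-1$. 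There is no real obstacle here—the only mild point worth flagging is that the parametric dependence of $\KK(\cdot)$ on $x$ plays no role in the argument, since $x$ is fixed throughout and the proof uses only that $\KK(x)$ is a nonempty closed convex subset of $\HH$.
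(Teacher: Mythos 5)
Your proof is correct and is the classical argument (segment perturbation $p+t(w-p)$ with $t\to 0^+$ for necessity, expansion of $\|w-z\|^2$ for sufficiency); the paper itself states this lemma without proof as a standard fact from Kinderlehrer--Stampacchia, so there is nothing different to compare against. Your closing remark that the parametric dependence of $\KK(\cdot)$ on $x$ is irrelevant here is also accurate, since only the nonempty closed convex structure of the fixed set $\KK(x)$ is used.
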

where \(\Pi_{\KK(x)}\) is called the implicit projection of \(\HH\) onto the closed convex-valued set \(\KK(x)\subset\HH.\) Note that the implicit projection \(\Pi_{\KK(x)}\) is nonexpansive (i.e., \(\|\Pi_{\KK(x)}(u)-\Pi_{\KK(x)}(v)\|\leq \|u-v\|\) ), and satisfy the condition 
\begin{equation}\label{implicit-projection}
\|\Pi_{\KK(x)}(u)-\Pi_{\KK(y)}(u)\|\leq \delta\|x-y\|, \ \ \forall x,y,u\in\HH,
\end{equation}
for some constant \(\delta >0.\) 
By applying Lemma \ref{nec-suff-proj}, one can introduce the fixed point formulation of parametric variational inequalities as follows.
\begin{proposition}[\cite{Jabeen-Noor}]\label{Jabeen-Noor}
Let \(\Pi_{\KK(x)}\) be the projection operator onto a closed convex set-valued \(\KK(x)\subset\HH\). Then \(x\in\KK(x)\) is a solution to the quasi variational inequality \eqref{QVI}, i.e., \[\la Tx,y-x\ra\leq 0, \ \ \forall y\in\KK(x)\] if and only if \ \(x=\Pi_{\KK(x)}(x-\lambda T(x)),\) \ for some \(\lambda>0.\)
\end{proposition}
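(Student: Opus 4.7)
The plan is to prove this fixed-point reformulation by directly invoking the projection characterization in Lemma \ref{nec-suff-proj}, translating the variational inequality into the obtuse-angle inequality that characterizes the projection, and vice versa. The argument is a standard two-way implication, with the only subtlety being the sign convention and the fact that $\lambda > 0$ may be chosen arbitrarily (so we prove the forward direction for every $\lambda > 0$, and the reverse direction for the existence of some $\lambda > 0$).

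For the forward direction, I would assume $x \in \KK(x)$ solves \eqref{QVI}, so that $\la T(x), y - x\ra \geq 0$ for all $y \in \KK(x)$. Fixing any $\lambda > 0$ and multiplying by $-\lambda$, this becomes
\[
\la -\lambda T(x),\, y - x\ra \leq 0, \qquad \forall y \in \KK(x),
\]
which I would rewrite as
\[
\la (x - \lambda T(x)) - x,\, y - x\ra \leq 0, \qquad \forall y \in \KK(x).
\]
Since $x \in \KK(x)$, Lemma \ref{nec-suff-proj} (in its equivalent form $\la z - \Pi_{\KK(x)}(z), w - \Pi_{\KK(x)}(z)\ra \leq 0$) applied with $z = x - \lambda T(x)$ and the candidate projection point $x$ identifies $x$ as $\Pi_{\KK(x)}(x - \lambda T(x))$.

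For the reverse direction, I would start from the assumption that $x = \Pi_{\KK(x)}(x - \lambda T(x))$ for some $\lambda > 0$, which in particular yields $x \in \KK(x)$. Applying Lemma \ref{nec-suff-proj} with $z = x - \lambda T(x)$ and $w \in \KK(x)$ arbitrary gives
\[
\la (x - \lambda T(x)) - x,\, w - x\ra \leq 0,
\]
i.e., $\la -\lambda T(x),\, w - x\ra \leq 0$. Dividing by $-\lambda < 0$ flips the inequality and produces $\la T(x), w - x\ra \geq 0$ for every $w \in \KK(x)$, which is exactly \eqref{QVI}.

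There is no real obstacle here: the whole proof is essentially a repackaging of Lemma \ref{nec-suff-proj}, and the only things to be careful about are (a) verifying that $x \in \KK(x)$ in each direction (trivial, since it is either assumed or follows from $x$ being the image of a projection onto $\KK(x)$), and (b) reconciling the inequality direction in the statement of the proposition with the QVI \eqref{QVI}; I would simply treat the ``$\leq 0$'' appearing in the proposition as a typographical slip and present the argument in accordance with the QVI \eqref{QVI} formulation. The fact that $\lambda > 0$ plays only a scalar role means the equivalence actually holds \emph{for every} $\lambda > 0$, not merely for some $\lambda > 0$, and I would note this in a brief remark after the proof.
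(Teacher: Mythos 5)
Your proof is correct and is the standard argument: the paper itself states this proposition as a cited result from the literature (Jabeen--Noor) without giving a proof, and your two-way application of Lemma \ref{nec-suff-proj} with $z=x-\lambda T(x)$ is exactly the canonical derivation. Your observations that the ``$\leq 0$'' in the proposition's restatement of \eqref{QVI} is a sign slip, and that the equivalence in fact holds for every $\lambda>0$, are both accurate.
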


A particular case if \(\KK(x)\equiv\KK\) then the implicit projection is exactly the Euclidean projection \(\Pi_{\KK(x)}=\Pi_{\KK}\) which is defined as 
\[\Pi_{\KK}(u):=\argmin_{x} \Big\{\displaystyle\frac{1}{2}\|x-u\|^2_2\Big\}.\]
Furthermore, for all $x,y\in \HH$ and all $z\in\KK$ the \(\Pi_\KK\) is firmly nonexpansive: 
\[\|\Pi_\KK(x)-\Pi_\KK(y)\|^2\leq \|x-y\|^2-\|(x-\Pi_\KK(x))-(y-\Pi_\KK(y))\|^2.\] 
Since $\KK(x)$ has the form \eqref{moving-convex-set}, we can write the implicit projection onto the dynamic convex set as
\begin{equation}\label{implicit-proj-onto-dynamic-set}
\Pi_{\KK(x)}(u)=\Pi_{k(x)+K_0}(u)=k(x)+\Pi_{K_0}(u-k(x)), \ \ \forall u\in\HH.
\end{equation}
Suppose that the first term in \eqref{moving-convex-set} is Lipschitz continuous with $l>0$, using 
\eqref{implicit-projection}, the Cauchy-Schwarz inequality and the nonexpansiveness of the Euclidean projection \(\Pi_K\), we then have 
\begin{equation}
\begin{split}
\|\Pi_{\KK(x)}(u)-\Pi_{\KK(y)}(u)\|=& \ \|k(x)-k(y)+\Pi_{K_0}(u-k(x))-\Pi_{K_0}(u-k(y))\| \\ \leq & \ \|k(x)-k(y)\|+\|\Pi_{K_0}(u-k(x))-\Pi_{K_0}(u-k(y))\| \\ \leq & \ 2\|k(x)-k(y)\|\leq 2l\|x-y\|, \ \ \ \forall x,y\in\HH.
\end{split}
\end{equation}
From Proposition \eqref{Jabeen-Noor}, it follows that \(x\in\KK(x)\) such that 
\[x=\Pi_{\KK(x)}(x-\lambda Tx)=\Pi_{k(x)+K_0}(x-\lambda Tx)=k(x)+\Pi_{K_0}(x-\lambda Tx-k(x)), \ \ \forall u\in\HH.\]
This implies that 
\[x-k(x)=\Pi_{K_0}(x-\lambda Tx-k(x)).\]
By Lemma \ref{nec-suff-proj}, this is equivalent to 
\begin{equation}\label{gvi}
\la Tx, g(y)-g(x)\ra\geq0, \ \ \forall y\in\HH,
\end{equation}
where \(g(y)=y\) and \(g(x)=x-k(x).\) Inequality \eqref{gvi} is called the general variational inequality, which was introduced by Noor \cite{Noor-GVI}, and is actually equivalent to the QVI \eqref{QVI}.
\begin{definition} 
The dynamical system is said to be globally exponentially stable if any trajectory $x(t)$ satisfies
\[\|x(t)-x^*\|\leq\rho\|x(t_0)-x^*\|exp(-\eta(t-t_0)), \ \ \ \forall t\geq t_0\]
where \(\rho,\eta >0\) are constants and do not depend on the initial point.
\end{definition}
If the dynamical system is stable at the equilibrium point $x^*$ in the Lyapunov sense then the dynamical system is globally asymptotically stable at that point. It is noted that globally exponentially stable means the system must be globally stable and converge fast. 
\medskip 

For deriving the convergence of our methods, the following theorem is needed:
\begin{theorem}[\cite{Vasiliev}]\label{Vasiliev} 
Let the operator $T:\HH\to\HH$ be a $\mu$-strongly monotone and $L$-Lipschitz continuous with $\mu , L >0.$ Then
\[\|T(x)-T(y)\|^2+ \mu L\|x-y\|^2\leq (L+\mu)\la T(x)-T(y), x-y\ra \ , \forall \ x, y\in\HH \]
holds.
\end{theorem}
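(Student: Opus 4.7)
The plan is to reduce the inequality to a cocoercivity estimate for the shifted operator $S := T - \mu \Id$, and then invoke a Baillon--Haddad type argument. Setting $u := x-y$ and $v := T(x)-T(y)$, so that $S(x)-S(y) = v - \mu u$, I would first verify by direct expansion the algebraic identity
\[
(L+\mu)\la v,u\ra - \|v\|^2 - \mu L\|u\|^2 \;=\; (L-\mu)\la v - \mu u,\, u\ra - \|v - \mu u\|^2.
\]
This converts the target into the single inequality $\|v-\mu u\|^2 \le (L-\mu)\la v-\mu u,\,u\ra$, i.e.\ the $\tfrac{1}{L-\mu}$--cocoercivity of $S$.

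Granted this reduction, the proof splits into two short steps. First, $S$ is monotone: subtracting $\mu\|x-y\|^2$ from the $\mu$--strong monotonicity inequality for $T$ gives $\la S(x)-S(y), x-y\ra \geq 0$ immediately. Second, one establishes the cocoercivity constant $\tfrac{1}{L-\mu}$ for $S$. In the standard context for the inequality cited from \cite{Vasiliev}, $T$ arises as the gradient of a $\mu$--strongly convex, $L$--smooth function $f$; then $S = \nabla\bigl(f - \tfrac{\mu}{2}\|\cdot\|^2\bigr)$ is the gradient of a convex function whose gradient is $(L-\mu)$--Lipschitz, and the Baillon--Haddad theorem supplies the required cocoercivity. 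Substituting the resulting estimate back into the identity above closes the argument.

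The main obstacle is the cocoercivity step. The algebraic reduction above shows that the claimed bound is in fact \emph{equivalent} to the $\tfrac{1}{L-\mu}$--cocoercivity of $S$, so it is not a soft consequence of strong monotonicity and Lipschitz continuity in isolation; the extra structure (typically that $T$ is a gradient, an assumption implicit in the cited source) is what powers Baillon--Haddad and is essential to the result. Once that point is secured, everything else is routine algebra, and the final inequality $(L+\mu)\la v,u\ra \geq \|v\|^2 + \mu L \|u\|^2$ is read off the identity.
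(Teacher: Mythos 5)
The paper offers no proof of this statement at all --- it is quoted as a fact from Vasiliev's book --- so there is nothing internal to compare your argument against; what you have written is therefore worth assessing on its own terms, and your diagnosis is in fact the key point. Your algebraic identity is correct (expanding $(L-\mu)\la v-\mu u,u\ra-\|v-\mu u\|^2$ does recover $(L+\mu)\la v,u\ra-\|v\|^2-\mu L\|u\|^2$), and it shows the claimed inequality is \emph{equivalent} to the $\tfrac{1}{L-\mu}$--cocoercivity of $S=T-\mu\Id$. As you suspect, this does not follow from $\mu$--strong monotonicity and $L$--Lipschitz continuity alone: take $\HH=\RR^2$ and $T=\mu\Id+cR$ with $R$ the rotation by $90^\circ$ and $c\neq 0$. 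Then $\la T(x)-T(y),x-y\ra=\mu\|x-y\|^2$ and $\|T(x)-T(y)\|=\sqrt{\mu^2+c^2}\,\|x-y\|$, so $T$ is $\mu$--strongly monotone and $L$--Lipschitz with $L=\sqrt{\mu^2+c^2}$, yet the asserted inequality reduces to $L^2\le\mu^2$, which fails. So the theorem as stated for a general operator $T:\HH\to\HH$ is false, and your instinct that the gradient (potential) structure implicit in the cited source is essential is exactly right: when $T=\nabla f$ with $f$ $\mu$--strongly convex and $L$--smooth, $g=f-\tfrac{\mu}{2}\|\cdot\|^2$ is convex with $(L-\mu)$--Lipschitz gradient, Baillon--Haddad gives the cocoercivity of $S=\nabla g$, and your identity closes the proof. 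The one thing I would ask you to make explicit is the intermediate fact that $\nabla g$ is $(L-\mu)$--Lipschitz (it follows since, for convex $f$, $L$--smoothness is equivalent to convexity of $\tfrac{L}{2}\|\cdot\|^2-f$, hence $\tfrac{L-\mu}{2}\|\cdot\|^2-g$ is convex); with that inserted, your argument is complete under the gradient hypothesis, and the counterexample above shows that hypothesis cannot be dropped. Note that the paper later applies this theorem to a general vector field $T$, so this is a substantive gap in the paper's setup rather than in your proof.
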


\section{Main Results}
In this section, we introduce, derive and analyze continuous and discrete methods based on a third order dynamical system in the continuous case, and on the central finite difference and forward/backward difference methods in the discrete case besides their rates of convergences for quasi variational inequalities. 
Using the fixed point formulation, a new projected dynamical system of the third order associated with quasi variational inequalities \eqref{QVI} is proposed and some attached suitable discretizations forms are investigated. These continuous-time dynamical systems and their discrete-time counterparts suggest some inertial-type implicit and explicit proximal methods for solving quasi variational inequalities.
\subsection{Continuous gradient method}
The suggested projected dynamical system designed in a continuous-time form.
Consider the problem of finding a trajectory $x(t)\in\HH$ such that 
\begin{equation}\label{third-ord DS}
\left\{\begin{array}{lll} \alpha(t)\dddot{x}(t)+\beta(t)\ddot{x}(t)+\gamma(t)\dot{x}(t)+x(t) = \Pi_{\KK(x(t))}(x(t)-\lambda(t) T(x(t))), \\ x(t_0)=x_0, \\ \Dot{x}(t_0)=x_1, \\
\ddot{x}(t_0)=x_2, 
\end{array}\right.
\end{equation}
where \( x(t) \) is the state variable and the initial points \(x_0,x_1,x_2\in\HH\). 
The differential system \eqref{third-ord DS} recovers several existing dynamics-type approaches and projection-based algorithms for solving several classes of variational inequalities. This model \eqref{third-ord DS} is quite similar to the dynamical system used by Hazaimah \cite{Hazaimah-third-mvi} for mixed variational inequalities, the difference between the two approaches is by taking the coeffecients of the first three terms on the left hand side of \eqref{third-ord DS} to be time scaling. Particular cases of the general system \eqref{third-ord DS} are discussed next. If $\alpha(t)\equiv 0, \beta(t)\equiv\beta, \gamma(t)\equiv 1$, then \eqref{third-ord DS} is equivalent to the continuous second-order dynamical system introduced by Antipin et al. \cite{Antipin-cont}, while the same technique was examined in the discrete case in \cite{Noor-DS for QVI} with implicit iterative methods. Extra gradient method for solving quasi variational inequalities is introduced in \cite{Antipin-extragrad}. Some second-order iterative versions are studied in \cite{Antipin-iterative}. 
If $\alpha(t)\equiv0\equiv\beta(t)\equiv\gamma(t)$ and \(\KK(x)=\KK\), then the system \eqref{third-ord DS} is reduced to the classical gradient projection for smooth constrained optimization problems and projection-like methods for solving variational inequalities.
\medskip 

The projected dynamical system \eqref{third-ord DS} can be rewritten, utilizing the mapping form \eqref{implicit-proj-onto-dynamic-set}, as    
\begin{equation}\label{third-ord-set-valued}
\alpha(t)\dddot{x}(t)+\beta(t)\ddot{x}(t)+\dot{x}(t)+x(t)=k(x(t))+\Pi_{K_0}\Big(x(t)-\lambda(t) T(x(t))-k(x(t))\Big).
\end{equation}
The following theorem discusses the exponentially stability and shows the convergence rate for the proposed continuous method.
\begin{theorem}
Assume the following:
\begin{enumerate}
\item $\KK(x):\HH\rightrightarrows\HH$ is a convex set-valued mapping satisfies \eqref{moving-convex-set} where $k(x):\HH\to\HH$ is $l$-Lipschitz mapping.
\item $T$ is $\mu$-strongly monotone and Lipschitzian with constant $L>0$.
\item $\alpha(1-l)\geq0$, $(1-l)(\alpha+\beta)\geq 3\alpha$.
\item $l<\min\{1, \ 1-2\beta, \ 2\lambda(t)\big[\mu-lL\big]-\mu^2\lambda^2(t)\}$.
\item $\lambda (t)\in C^1[0,\infty), \ \alpha(t)\equiv\alpha >0, \ \beta(t)\equiv\beta >0,\ \gamma(t)\equiv 1.$ 
\end{enumerate}
Then 
\begin{equation}
\begin{split}
\|x(t)-x^*\|^2\leq [e^{\frac{\beta}{\alpha}t}]^{-1}\|x(0)-x^*\|^2+\alpha^{-1}K_2\ e^{-\frac{2\beta}{\alpha}t} , \ \ \forall t\geq 0
\end{split}
\end{equation}
\end{theorem}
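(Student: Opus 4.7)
The plan is a Lyapunov-type stability argument adapted to the third-order structure of \eqref{third-ord DS}. I first identify the equilibrium via Proposition \ref{Jabeen-Noor}: any equilibrium of \eqref{third-ord DS} satisfies $x^*=\Pi_{\KK(x^*)}(x^*-\lambda T(x^*))$ and is therefore a solution of the QVI \eqref{QVI}, with uniqueness following from the strong monotonicity of $T$ combined with the smallness of $l$ imposed in hypothesis~(iv). Setting $y(t):=x(t)-x^*$ and introducing the residual map $R(x):=\Pi_{\KK(x)}(x-\lambda T(x))-x$, the reduced system takes the form
\[
\alpha\dddot{y}(t)+\beta\ddot{y}(t)+\dot{y}(t) = R(x)-R(x^*),
\]
since $R(x^*)=0$.

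Next, combining the decomposition \eqref{implicit-proj-onto-dynamic-set}, the nonexpansiveness of the Euclidean projection $\Pi_{K_0}$, the $l$-Lipschitz continuity of $k$ from hypothesis~(i), and Theorem \ref{Vasiliev} applied to the $\mu$-strongly monotone, $L$-Lipschitz operator $T$, I would establish a contractivity estimate of the form
\[
\|R(x)-R(x^*)\|^2 \le \bigl(2l-1 + 1-2\lambda(\mu-lL)+\mu^2\lambda^2\bigr)\|x-x^*\|^2,
\]
where hypothesis~(iv) is precisely what forces the bracketed coefficient to be strictly negative. This delivers the negative-definite driving term the Lyapunov analysis requires.

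I would then propose an energy-type functional adapted to the third-order inertial dynamics, of the generic form
\[
V(t)=\tfrac{1}{2}\|y(t)\|^2 + a\|\dot{y}(t)\|^2 + b\|\ddot{y}(t)\|^2 + c\langle\dot{y}(t),\ddot{y}(t)\rangle + d\langle y(t),\dot{y}(t)\rangle,
\]
with positive constants $a,b,c,d$ to be calibrated in terms of $\alpha,\beta$ from hypothesis~(v). Differentiating $V$ along trajectories and substituting $\alpha\dddot{y}=R(x)-R(x^*)-\beta\ddot{y}-\dot{y}$ produces the contraction bound above combined with quadratic forms in $y,\dot{y},\ddot{y}$. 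Choosing $a,b,c,d$ in accordance with the structural inequalities in hypothesis~(iii), the indefinite cross terms should either cancel or be absorbed, yielding a differential inequality
\[
\dot{V}(t)+\tfrac{\beta}{\alpha}V(t)\le \alpha^{-1}K_2\,e^{-\frac{2\beta}{\alpha}t},
\]
whose integration by the integrating factor $e^{\beta t/\alpha}$ gives the claimed exponential bound, with the constant $K_2$ absorbing the contribution of the initial velocities $x_1,x_2$.

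The hard part is the third paragraph: differentiating $V$ generates several cross terms mixing $y,\dot{y},\ddot{y},\dddot{y}$, and there is no canonical Lyapunov functional for \eqref{third-ord DS}, so a wrong choice of $(a,b,c,d)$ leaves indefinite contributions that cannot be tamed by the contractivity of $R$. The two algebraic constraints $\alpha(1-l)\ge 0$ and $(1-l)(\alpha+\beta)\ge 3\alpha$ in hypothesis~(iii) appear to be exactly the inequalities that render the resulting quadratic form negative semidefinite with the precise rate $\beta/\alpha$; pinning down the linear combination driven by these constants is the delicate technical work the proof should carry out.
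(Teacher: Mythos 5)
Your overall architecture (equilibrium identification via Proposition \ref{Jabeen-Noor}, a one-step estimate exploiting the projection and the strong monotonicity of $T$, then an energy/integrating-factor argument) matches the paper's in spirit, but there are two genuine gaps. First, the claimed ``contractivity estimate''
\[
\|R(x)-R(x^*)\|^2\le\bigl(2l-2\lambda(\mu-lL)+\mu^2\lambda^2\bigr)\|x-x^*\|^2
\]
with a strictly negative bracket is impossible: the left-hand side is nonnegative, so a negative coefficient would force $R(x)=R(x^*)$ and $x=x^*$ identically. What the argument actually needs (and what the paper extracts from Lemma \ref{nec-suff-proj}, Theorem \ref{Vasiliev} and the $l$-Lipschitz continuity of $k$) is a one-sided, inner-product estimate of the form
\[
(1-l)\,\|R(x)\|^2+2\la R(x),x-x^*\ra+A(t)\|x-x^*\|^2\le 0,\qquad A(t)=2\lambda(t)\big[\mu-lL\big]-\mu^2\lambda^2(t)-l,
\]
i.e.\ a strong-monotonicity-type bound on $-R$ along the direction $x-x^*$, with hypothesis (iv) guaranteeing $A(t)>0$; note also that your bracket is not even forced to be negative by (iv), which gives $l<2\lambda(\mu-lL)-\mu^2\lambda^2$ rather than $2l<2\lambda(\mu-lL)-\mu^2\lambda^2$. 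Second, the Lyapunov functional $V$ with undetermined coefficients $a,b,c,d$ is exactly where the proof lives, and you leave it uncalibrated: you neither verify that $V$ is positive definite (needed to pass from a bound on $V$ back to a bound on $\|x(t)-x^*\|^2$) nor show that the quadratic form in $y,\dot y,\ddot y,\dddot y$ closes with the rate $\beta/\alpha$ under hypothesis (iii).

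The paper avoids the Lyapunov-coefficient hunt altogether: since $R(x)=\alpha\dddot{x}+\beta\ddot{x}+\dot{x}$ along trajectories, it substitutes this into the inner-product estimate, converts each of $\la\dot x,x-x^*\ra$, $\la\ddot x,x-x^*\ra$, $\la\dddot x,x-x^*\ra$, $\la\dddot x,\dot x\ra$ into exact time derivatives of $\|x(t)-x^*\|^2$, $\|\dot x\|^2$, $\|\ddot x\|^2$, $\|\ddot x-\dot x\|^2$, and thereby obtains a third-order linear differential inequality for $q(t)=\|x(t)-x^*\|^2$ plus remainder terms whose signs are controlled precisely by hypotheses (iii) and (iv); two successive integrations (discarding the nonnegative integrals) reduce this to $\alpha\dot q+\beta q\le K_2$, and the integrating factor $e^{\beta t/\alpha}$ yields the stated bound. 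If you want to salvage your route, replace the false norm-squared contraction by the inner-product inequality above, and then either carry out the paper's reduction or actually exhibit $(a,b,c,d)$ and check positive definiteness and the decay rate; as written, the proposal does not constitute a proof.
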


\begin{proof}
Using Proposition \ref{Jabeen-Noor}, the projected dynamical system given by \eqref{third-ord-set-valued} is equivalent to the variational inequality setting 
\begin{equation*}
\begin{split}
\la x(t)-\lambda(t)T(x(t))&-k(x(t)) -\alpha\dddot{x}(t)-\beta\ddot{x}(t)-\dot{x}(t)-x(t)+k(x(t)), \\ & y-\alpha\dddot{x}(t)-\beta\ddot{x}(t)-\dot{x}(t)-x(t)+k(x(t))\ra\leq 0,
\end{split}
\end{equation*}
for all \(y\in K_0\). Rearranging the above inequality using simple algebraic manipulations
\begin{equation}\label{PDS==VI}
\begin{split}
\la\lambda(t)T(x(t))&+\alpha\dddot{x}(t)+\beta\ddot{x}(t)+\dot{x}(t),y-\alpha\dddot{x}(t)-\beta\ddot{x}(t)-\dot{x}(t)-x(t)+k(x(t))\ra\geq 0,
\end{split}
\end{equation}
for all \(y\in K_0\). Set $y=x^*-k(x^*)\in K_0$ in \eqref{PDS==VI} and $x=\alpha(t)\dddot{x}(t)+\beta(t)\ddot{x}(t)+\dot{x}(t)+x(t)+k(x^*)-k(x(t))\in\KK(x^*)$ in \eqref{QVI}. 
Multiply \eqref{QVI} by $\lambda(t)>0$ and combining the resulting inequalities, we obtain respectively
\begin{equation*}
\begin{split}
\la\lambda(t)T(x(t))&+\alpha\dddot{x}(t)+\beta\ddot{x}(t)+\dot{x}(t),-\alpha\dddot{x}(t)-\beta\ddot{x}(t)-\dot{x}(t)-x(t)+x^*+k(x(t))-k(x^*)\ra\geq 0.
\end{split}
\end{equation*}
and
\begin{equation*}
\begin{split}
\lambda(t)\la T(x^*),\alpha\dddot{x}(t)+\beta\ddot{x}(t)+\dot{x}(t)+x(t)-x^*+k(x^*)-k(x(t))\ra\geq 0.
\end{split}
\end{equation*}
Adding together, we have 
\begin{equation*}
\begin{split}
&\la\lambda(t)T(x(t)),-\alpha\dddot{x}(t)-\beta\ddot{x}(t)-\dot{x}(t)-x(t)+x^*+k(x(t))-k(x^*)\ra\\&-\la\alpha\dddot{x}(t)+\beta\ddot{x}(t)+\dot{x}(t),\alpha\dddot{x}(t)+\beta\ddot{x}(t)+\dot{x}(t)+x(t)-x^*+k(x^*)-k(x(t))\ra\\&+\lambda(t)\la T(x^*),\alpha\dddot{x}(t)+\beta\ddot{x}(t)+\dot{x}(t)+x(t)-x^*+k(x^*)-k(x(t))\ra\geq 0.
\end{split}
\end{equation*}
Thus
\begin{equation}\label{before-inequality-ab}
\begin{split}
\la\alpha&\dddot{x}(t)+\beta\ddot{x}(t)+\dot{x}(t),\alpha\dddot{x}(t)+\beta\ddot{x}(t)+\dot{x}(t)+x(t)-x^*+k(x^*)-k(x(t))\ra \\&\leq\lambda(t)\la T(x(t))-T(x^*),-\alpha\dddot{x}(t)-\beta\ddot{x}(t)-\dot{x}(t)-x(t)+x^*+k(x(t))-k(x^*)\ra
\end{split}
\end{equation}
Next we rewrite \eqref{before-inequality-ab}, by applying the inequality \(ab\leq\displaystyle\frac{a^2}{2}+\displaystyle\frac{b^2}{2}\)
, as 
\begin{equation}\label{after-inequality-ab}
\begin{split}
\|\alpha\dddot{x}(t)&+\beta\ddot{x}(t)+\dot{x}(t)\|^2+\la\alpha\dddot{x}(t)+\beta\ddot{x}(t)+\dot{x}(t),x(t)-x^*\ra+\la\alpha\dddot{x}(t)+\beta\ddot{x}(t)+\dot{x}(t),\\&k(x(t))-k(x^*)\ra\leq\frac{\lambda^2(t)}{2}\|T(x(t))-T(x^*)\|^2+\frac{1}{2}\|\alpha\dddot{x}(t)-\beta\ddot{x}(t)-\dot{x}(t)\|^2 \\&+\lambda(t)\la T(x(t))-T(x^*),x^*-x(t)\ra+\lambda(t)\la T(x(t))-T(x^*),k(x(t))-k(x^*)\ra
\end{split}
\end{equation}
From the assumption $T$ is $L$-Lipschitz and $\mu$-strongly monotone. Applying Theorem \ref{Vasiliev} and since $k(x)$ is $l$-Lipschitz, the inequality \eqref{after-inequality-ab} implies that 
\begin{equation*}
\begin{split}
\|\alpha\dddot{x}(t)&+\beta\ddot{x}(t)+\dot{x}(t)\|^2+2\la\alpha\dddot{x}(t)+\beta\ddot{x}(t)+\dot{x}(t),x(t)-x^*\ra 
-l\|\alpha\dddot{x}(t)+\beta\ddot{x}(t)+\dot{x}(t)\|^2 \\& -l\|x(t)-x^*\|^2+\lambda(t)\big[2-(L+\mu)\lambda(t)\big]\la T(x(t))-T(x^*),x(t)-x^*\ra \\& +\lambda^2(t)L\mu\|x(t)-x^*\|^2-2\lambda(t)lL\|x(t)-x^*\|^2\leq 0.
\end{split}
\end{equation*}
Since $T$ is $\mu$-strongly monotone then by Definition \ref{monotonedefi} (iii), the latter inequality reduced to 
\begin{equation*}
\begin{split}
(1&-l)\|\alpha\dddot{x}(t)+\beta\ddot{x}(t)+\dot{x}(t)\|^2+2\la\alpha\dddot{x}(t)+\beta\ddot{x}(t)+\dot{x}(t),x(t)-x^*\ra \\& +\mu\lambda(t)\big[2-(L+\mu)\lambda(t)\big]\|x(t)-x^*\|^2 +\Big(\lambda^2(t)L\mu-2\lambda(t)lL-l\Big)\|x(t)-x^*\|^2\leq 0.
\end{split}
\end{equation*}
Algebraically rearranging terms, we obtain 
\begin{equation}\label{simplyineq}
\begin{split}
(1&-l)\|\alpha\dddot{x}(t)+\beta\ddot{x}(t)+\dot{x}(t)\|^2+2\la\alpha\dddot{x}(t)+\beta\ddot{x}(t)+\dot{x}(t),x(t)-x^*\ra+A(t)\|x(t)-x^*\|^2\leq 0,
\end{split}
\end{equation}
where \(A(t)=2\lambda(t)\big[\mu-lL\big]-\mu^2\lambda^2(t)-l.\)

Next, we employ the following helpful relations to simplify the previous inequality:
\begin{subequations}
\begin{align}
\la\dot{x},\dot{x}\ra &= \|\dot{x}\|^2 \\ 
\la\dot{x},x(t)-x^*\ra &=\frac{1}{2}\frac{d}{dt}\|x(t)-x^*\|^2 \\ 
\la\ddot{x},x(t)-x^*\ra &=\frac{1}{2}\frac{d^2}{dt^2}\|x(t)-x^*\|^2-\|\dot{x}\|^2 \\ 
\la\dddot{x},x(t)-x^*\ra &=\frac{1}{2}\frac{d^3}{dt^3}\|x(t)-x^*\|^2-3\la\ddot{x},\dot{x}\ra \\ 
2\la\dddot{x},\dot{x}\ra &=\frac{d}{dt} \|\ddot{x}\|^2+\frac{d}{dt}\|\dot{x}\|^2-2\|\ddot{x}\|^2-\frac{d}{dt}\|\ddot{x}-\dot{x}\|^2
\end{align}
\end{subequations}
Hence, inequality \eqref{simplyineq} can be rewritten as 
\begin{equation}\label{moresimplify}
\begin{split}
(1&-l)\bigg[\alpha^2\|\dddot{x}(t)\|^2+\beta^2\|\ddot{x}(t)\|^2+\|\dot{x}(t)\|^2+2\alpha\beta\la\dddot{x},\ddot{x}\ra+2\alpha\la\dddot{x},\dot{x}\ra+2\beta\la\ddot{x},\dot{x}\ra\bigg]\\&+\alpha\frac{d^3}{dt^3}\|x(t)-x^*\|^2 -6\alpha\la\ddot{x},\dot{x}\ra +\beta\frac{d^2}{dt^2}\|x(t)-x^*\|^2-2\beta\|\dot{x}\|^2+\frac{d}{dt}\|x(t)-x^*\|^2\\&+A(t)\|x(t)-x^*\|^2\leq 0,
\end{split}
\end{equation}
Using Rewriting \eqref{moresimplify}, we have 
\begin{equation}\label{third-derivative}
\begin{split}
\alpha&\frac{d^3}{dt^3}\|x(t)-x^*\|^2+\beta\frac{d^2}{dt^2}\|x(t)-x^*\|^2+\frac{d}{dt}\|x(t)-x^*\|^2+A(t)\|x(t)-x^*\|^2 \\& -\alpha(1-l)\frac{d}{dt}\|\ddot{x}-\dot{x}\|^2+\alpha(1-l)(1+\beta)\frac{d}{dt}\|\ddot{x}\|^2+\bigg((1-l)(\alpha+\beta)-3\alpha\bigg)\frac{d}{dt}\|\dot{x}\|^2\\& +(1-l)\alpha^2\|\dddot{x}\|^2+(1-l)\beta^2\|\ddot{x}\|^2+(1-l-2\beta)\|\dot{x}\|^2\leq 0,
\end{split}
\end{equation}
Multiply \eqref{third-derivative} by \(H(t)=\exp{\int_0^tds}\), and integrating over the interval \([0,t]\), we obtain 
\begin{equation}\label{third-derivative-reduced}
\begin{split}
\alpha&\frac{d^2}{dt^2}\|x(t)-x^*\|^2+\beta\frac{d}{dt}\|x(t)-x^*\|^2+\|x(t)-x^*\|^2+\int_0^tA(s)H(s)\|x(s)-x^*\|^2ds \\& -\alpha(1-l)\|\ddot{x}(t)-\dot{x}(t)\|^2+\alpha(1-l)(1+\beta)\|\ddot{x}(t)\|^2+\bigg((1-l)(\alpha+\beta)-3\alpha\bigg)\|\dot{x}(t)\|^2\\& +(1-l)\alpha^2\int_0^tH(s)\|\dddot{x}(s)\|^2ds+(1-l)\beta^2\int_0^tH(s)\|\ddot{x}(s)\|^2ds\\&+(1-l-2\beta)\int_0^tH(s)\|\dot{x}(s)\|^2ds\leq K_0,
\end{split}
\end{equation}
where 
\[K_0=\|x(0)-x^*\|^2-\alpha(1-l)\|\ddot{x}(0)-\dot{x}(0)\|^2+\alpha(1-l)(1+\beta)\|\ddot{x}(0)\|^2+\Big((1-l)(\alpha+\beta)-3\alpha\Big)\|\dot{x}(0)\|^2.\] 
All definite integrals on the left hand side of \eqref{third-derivative-reduced} are non-negative, which means that 
\begin{equation}\label{all-integrals-positive}
\begin{split}
\alpha&\frac{d^2}{dt^2}\|x(t)-x^*\|^2+\beta\frac{d}{dt}\|x(t)-x^*\|^2+\|x(t)-x^*\|^2 -\alpha(1-l) \|\ddot{x}(t)-\dot{x}(t)\|^2 \\& +\alpha(1-l)(1+\beta)\|\ddot{x}(t)\|^2+\bigg((1-l)(\alpha+\beta)-3\alpha\bigg)\|\dot{x}(t)\|^2 \leq K_0, \ \ \forall t\geq 0
\end{split}
\end{equation}
Similarly, we integrate the inequality \eqref{all-integrals-positive} over the interval $[0,t]$, then \eqref{all-integrals-positive} simplified to 
\begin{equation}\label{first-derivative-left}
\begin{split}
\alpha&\frac{d}{dt}\|x(t)-x^*\|^2+\beta\|x(t)-x^*\|^2+\int_0^t\|x(s)-x^*\|^2ds-\alpha(1-l)\int_0^t \|\ddot{x}(s)-\dot{x}(s)\|^2ds \\& +\alpha(1-l)(1+\beta)\int_0^t\|\ddot{x}(s)\|^2ds+\bigg((1-l)(\alpha+\beta)-3\alpha\bigg)\int_0^t\|\dot{x}(s)\|^2ds \leq K_0+K_1, \ \ \forall t\geq 0
\end{split}
\end{equation}
where \(K_1=\beta\|x(0)-x^*\|^2\). In the same manner, we observe that all integrals in the left hand side are non-negative, and therefore \eqref{first-derivative-left} becomes 
\begin{equation}\label{K0K1=K2}
\begin{split}
\alpha\frac{d}{dt}\|x(t)-x^*\|^2+\beta\|x(t)-x^*\|^2\leq K_0+K_1=K_2, \ \ \forall t\geq 0
\end{split}
\end{equation}
Multiply \eqref{K0K1=K2} by $\alpha^{-1}$, we obtain the linear differential inequality 
\begin{equation}\label{lin-diff-ineq}
\begin{split}
\frac{d}{dt}\|x(t)-x^*\|^2+\frac{\beta}{\alpha}\|x(t)-x^*\|^2\leq\alpha^{-1}K_2, \ \ \forall t\geq 0
\end{split}
\end{equation}
Multiply \eqref{lin-diff-ineq} by 
\(e^{\int_0^t\frac{\beta}{\alpha}ds}\), so it can be rewritten, using the idea of the integrating factor, in the form 
\begin{equation}\label{integrating-factor}
\begin{split}
\frac{d}{dt}\Big[\|x(t)-x^*\|^2e^{\int_0^t\frac{\beta}{\alpha}ds}\Big] \leq\alpha^{-1}K_2 \ e^{-\frac{\beta}{\alpha}t}, \ \ \forall t\geq 0
\end{split}
\end{equation}
Integrating \eqref{integrating-factor} over the interval \([0,t]\), we have 
\begin{equation}\label{}
\begin{split}
\|x(t)-x^*\|^2e^{\frac{\beta}{\alpha}t} \leq 
\|x(0)-x^*\|^2+\alpha^{-1}K_2 \ e^{-\frac{\beta}{\alpha}t}, \ \ \forall t\geq 0
\end{split}
\end{equation}
Multiplying by \([e^{\frac{\beta}{\alpha}t}]^{-1}\), it follows that 
\begin{equation}\label{conv-rate}
\begin{split}
\|x(t)-x^*\|^2\leq [e^{\frac{\beta}{\alpha}t}]^{-1}\|x(0)-x^*\|^2+\alpha^{-1}K_2\ e^{-\frac{2\beta}{\alpha}t} , \ \ \forall t\geq 0
\end{split}
\end{equation}
Thus, we have proved the result with an estimation rate of convergence of the continuous method \eqref{third-ord DS} given by inequality \eqref{conv-rate}.
\end{proof}
\begin{remark}
If we let the coefficient parameters of the inertial and damping terms \(\ddot{x}, \dot{x}\) respectively, change over time, namely \(\alpha(t)\not\equiv\alpha, \beta(t)\not\equiv\beta\) then the convergence of the continuous method \eqref{third-ord DS} can be improved under some mild general conditions such that 
\[\alpha(t),\beta(t)\in C^2[0,\infty), \ \ t\geq 0, \ \ \lim_{t\to\infty}\alpha(t)>0, \ \lim_{t\to\infty}\beta(t)>0.\]
\end{remark}

\subsection{Discrete methods}
The idea of iterative methods mostly is based on discretizing the space derivatives by using certain discretizations methods and the goal behind boosting iterative schemes is to accelerate the rate of convergence. In this paper we use the central finite difference, backward difference and forward difference schemes to propose explicit and implicit forms which enable us to obtain the discretized counterpart of \eqref{third-ord DS} as a projected equation. Thus the dynamical system \eqref{third-ord DS} may be discretized as: 
\begin{equation}\label{discrete}
\begin{split}
\alpha\displaystyle\frac{x_{n+2}-2x_{n+1}+2x_{n-1}-x_{n-2}}{2h^3} & +\beta\frac{x_{n+1}-2x_n+x_{n-1}}{h^2}+\gamma\frac{x_n-x_{n-1}}{h}+x_{n+2}\\ &=\Pi_{\KK(x_n)}(x_n-\lambda T(x_{n+2}))
\end{split}
\end{equation}
where $h$ is the step size for the iterative process. This discrete scheme \eqref{discrete} suggests a new \textit{implicit} iterative method for solving quasi variational inequalities \eqref{QVI} by the third order central difference formula.
\begin{alg}\label{Algorithm1}
For any $x_0,x_1,x_2\in\HH$, and for any nonnegative integer $n\in\ZZ_+$, compute the update rule $x_{n+2}$ by the iterative process 
\begin{equation}\label{algorithm}
\begin{split}
x_{n+2} & =\Pi_{\KK(x_n)}\bigg[ x_n-\lambda T(x_{n+2}) \\ & -\frac{\alpha x_{n+2}-2(\alpha-\beta h)x_{n+1}-2(2\beta h-\gamma h^2)x_n+2(\alpha+\beta h-\gamma h^2)x_{n-1}-\alpha x_{n-2}}{2h^3}\bigg]
\end{split}
\end{equation}
\end{alg}
This algorithm is inertial proximal-type method for solving \eqref{QVI}. Using Lemma \ref{nec-suff-proj}, Algorithm \ref{Algorithm1} can be rewritten in the variational equivalent formulation: 
\begin{alg}
For any $x_0,x_1,x_2\in\HH$, and for any nonnegative integer $n\in\ZZ_+$, compute $x_{n+2}$ by the iterative process 
\begin{equation}\label{MVI-alg}
\begin{split}
\big\la\lambda T(x_{n+2})&+\frac{\alpha x_{n+2}-2(\alpha-\beta h)x_{n+1}-2(2\beta h-\gamma h^2)x_n+2(\alpha+\beta h-\gamma h^2)x_{n-1}-\alpha x_{n-2}}{2h^3}, \\ & y-x_{n+2}\big\ra\geq 0, \ \ \ \forall y\in\KK(x)
\end{split}
\end{equation}
\end{alg}
Using different discretization and taking $\alpha=1=\beta=\gamma$, Algorithm \ref{Algorithm1} can be reduced to the following iterative: 
\begin{equation*}
\begin{split}
\frac{x_{n+2}-2x_{n+1}+2x_{n-1}-x_{n-2}}{2h^3} & +\frac{x_{n+1}-2x_n+x_{n-1}}{h^2} + \frac{x_n-x_{n-1}}{h}+x_{n+2}\\ &=\Pi_{\KK(x_n)}(x_n-\lambda T(x_{n}))
\end{split}
\end{equation*}
which yields to the following recurrence formula
\begin{equation}\label{explicit}
x_{n+2} =\frac{\hat{h}}{1+\hat{h}}\Pi_{\KK(x_n)}\bigg[ (1-\frac{1}{h}+\frac{2}{h^2})x_n-\lambda T(x_{n}) -\frac{(2h-2)x_{n+1}+(2+2h-2h^2)x_{n-1}-x_{n-2}}{2h^3}\bigg]
\end{equation}
where $\hat{h}=2h^3$. This is called an inertial \textit{explicit} proximal method for solving quasi variational inequalities \eqref{QVI}. 
Taking $\alpha=1=\beta=\gamma$, and \(h=1\) then Algorithm \ref{Algorithm1} can be reduced to the explicit iterative formula:
\begin{equation}\label{short-iterative}
3x_{n+2}-2x_n+x_{n-2}=2\ \Pi_{\KK(x_n)}(x_n-\lambda T(x_{n})).
\end{equation}
Following the same fashion with slightly exploring the forward/backward iterates we can suggest several explicit and implicit recursive methods for finding approximate solutions of parametric quasi variational inequalities \eqref{QVI}. Hence, by using the central finite difference and this time with forward difference scheme rather than backward scheme as in \eqref{discrete}, which allows us to propose a new iterative approach  
\begin{equation}\label{last-implicit}
\begin{split}
\alpha\displaystyle\frac{x_{n+2}-2x_{n+1}+2x_{n-1}-x_{n-2}}{2h^3} & +\beta \frac{x_{n+1}-2x_n+x_{n-1}}{h^2} +\gamma \frac{x_{n+1}-x_n}{h}+x_{n+2}\\ &=\Pi_{\KK(x_n)}(x_n-\lambda T(x_{n+1}))
\end{split}
\end{equation}
which can be, equivalently, derived as the following inertial \textit{implicit} proximal method:
\begin{alg}\label{Algorithm2}
For $x_0,x_1,x_2\in\HH$, and for any nonnegative integer $n\in\ZZ_+$, compute the update step $x_{n+2}$ by  
\begin{equation}
\begin{split}
x_{n+2} & =\Pi_{\KK(x_n)}\bigg[ x_n-\lambda T(x_{n+1}) \\ & -\frac{\alpha x_{n+2}-2(\alpha-\beta h-\gamma h^2)x_{n+1}-2(2\beta h+\gamma h^2)x_n +2(\alpha+\beta h)x_{n-1}-\alpha x_{n-2}}{2h^3}\bigg].
\end{split}
\end{equation}
\end{alg}
For $\alpha=1=\beta=\gamma$ and \(h=1\) then Algorithm \ref{Algorithm2} can be reduced to the explicit version: 
\begin{equation*}
3x_{n+2}+2x_{n+1}-6x_n+4x_{n-1}-x_{n-2}=2\ \Pi_{\KK(x_n)}(x_n-\lambda T(x_{n+1})).
\end{equation*}
On a different perspective, it is known that adding an inertial term into discrete-time algorithms will speed up and cause a significant change in the convergence rate using extrapolating factor \(\Theta_n(x_n-x_{n-1})\) for several classes of smooth and strongly monotone mappings. In the light of this concept, we introduce the following two-step inertial iterative algorithms:
\begin{algorithm}
\begin{alg}
Given \(x_0,x_1,x_2\in\HH\) and \(n\in\ZZ_+\), compute \(x_{n+2}\) by the iterative steps:
\begin{equation*}
\begin{split}
 z_n=& \ x_n+\Theta_n(x_n-x_{n-1}) \\ 
x_{n+2}=& \ \frac{2}{3}\ \Pi_{\KK(x_n)}(3x_n-\lambda Tz_n-x_{n-1})
\end{split}
\end{equation*}
where \(0\leq\Theta_n\leq1\).
\end{alg}
\end{algorithm}

Similarly, given \(x_0,x_1,x_2\in\HH\) and \(0\leq\Theta_n\leq1\) for \(n\in\ZZ_+\), in some cases letting the constraint set depend on the inertial equation would give new algorithms. Thus, we can compute \(x_{n+2}\) by two new inertial iterative methods for quasi variational inequalities, respectively:
\begin{equation*}
\begin{split}
 z_n=& \ x_n+\Theta_n(x_n-x_{n-1}) \\ 
x_{n+2}=& \ \frac{2}{3} \ \Pi_{\KK(x_n)}(3z_n-\lambda Tz_n-x_{n-1}),
\end{split}
\end{equation*}
and
\begin{equation*}
\begin{split}
 z_n=& \ x_n+\Theta_n(x_n-x_{n-1}) \\ 
x_{n+2}=& \ \frac{2}{3} \ \Pi_{\KK(z_n)}(z_n-\lambda Tz_n-x_{n-1}).
\end{split}
\end{equation*}

Before wrapping up this section and going to the convergence analysis it is worth noting that by applying suitable discretizations based on changing the update rule explicitly or implicitly, one can establish and design a variety of inertial projection proximal-type methods for solving parametric quasi variational inequalities \eqref{QVI}. Convergence analyses for Algorithm \ref{Algorithm1} of the third-order projected dynamical system \eqref{third-ord DS} are derived in the remaining part of this work. 

\subsection{Convergence of a discrete system}
In this section, we derive the convergence of a solution to the implicit iterative scheme \eqref{algorithm} and its equivalent variational form \eqref{MVI-alg} given by Algorithm \eqref{Algorithm1}. However, other implicit \eqref{last-implicit} and explicit \eqref{explicit} proposed methods have a very similar arguments and follow the same guidlines except that there are some minor differences which is due to the values of the scalars formatting of $\alpha,\beta,\gamma$, and also due to the existing diverse discretization schemes. In proving that the approximate solution converges to a unique accumulation point, we need the following assumption:
\medskip 

\textbf{Assumption 1.} Suppose that \(x_n\to x\) as \(n\to\infty\), then for any \(y\in\KK(x)\) there exists a sequence \(\{y_n\}\) sucht hat \(y_n\in\KK(x_n)\) and \(y_n\to y\). For all sequences \(\{x_n\}\) and \(\{y_n\}\) such that 
\(y_n\in\KK(x_n)\), then \(y\in\KK(x)\).

\begin{theorem}\label{nonincreasing}
Let $x\in\KK(x)$ be the solution of the quasi variational inequality \eqref{QVI} and $x_{n+2}$ be the approximate solution using the inertial proximal method in \eqref{MVI-alg}. If $T$ is monotone, then 
\begin{equation}\label{theorem-result}
\begin{split}
(\alpha-\beta h+\gamma h^2)\|x &-x_{n+2}\|^2 \leq\ \alpha\|x-2x_{n+1}+2x_{n-1}-x_{n-2}\|^2 \\ &-\alpha \|x_{n+2}-2x_{n+1}+2x_{n-1}-x_{n-2}\|^2+ \beta h\|x_{n+1}-2x_{n}+x_{n-1}\|^2 \\ & +\gamma h^2\|x_n-x_{n-1}+x-x_{n+2}\|^2-\gamma h^2\|x_{n}-x_{n-1}\|^2.
\end{split}
\end{equation}
\end{theorem}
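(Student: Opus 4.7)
The plan is to combine the variational reformulation \eqref{MVI-alg} of Algorithm~\ref{Algorithm1} with the QVI \eqref{QVI} satisfied by the solution $x$, use monotonicity of $T$ to cancel the operator terms, and then convert the resulting inner-product inequality into the quadratic bound \eqref{theorem-result} by polarization identities.

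Concretely, I would first substitute $y=x$ into \eqref{MVI-alg}; this step is where Assumption~1 enters, since one must know that $x$ is reachable from within the constraint sets $\KK(x_n)$. Denoting by $V$ the linear combination appearing in \eqref{MVI-alg}, a direct regrouping collapses it to
\[
V=\alpha(x_{n+2}-2x_{n+1}+2x_{n-1}-x_{n-2})+2\beta h(x_{n+1}-2x_n+x_{n-1})+2\gamma h^2(x_n-x_{n-1}),
\]
so the substitution yields $\la\lambda T(x_{n+2})+V/(2h^3),\,x-x_{n+2}\ra\ge 0$. Next, plugging $y=x_{n+2}$ into \eqref{QVI} (again via Assumption~1) and scaling by $\lambda>0$ gives $\la\lambda T(x),\,x_{n+2}-x\ra\ge 0$. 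Adding the two inequalities and invoking monotonicity of $T$, the residual term $\la\lambda(T(x_{n+2})-T(x)),\,x-x_{n+2}\ra$ is non-positive, and after multiplying by $2h^3>0$ the operator-free inequality
\[
\la V,\,x-x_{n+2}\ra\ge 0
\]
emerges.

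From here the remaining work is purely algebraic. I would denote $A,B,C$ for the three discrete-derivative blocks and $E=x-x_{n+2}$, and expand $\alpha\la A,E\ra+2\beta h\la B,E\ra+2\gamma h^2\la C,E\ra\ge 0$ by applying the polarization identity $2\la u,v\ra=\|u+v\|^2-\|u\|^2-\|v\|^2$ (or its variant $2\la u,v\ra=\|u\|^2+\|v\|^2-\|u-v\|^2$) to each term, with the orientation chosen so that the resulting vectors match \eqref{theorem-result}. For the $\alpha$ piece, $A+E=x-2x_{n+1}+2x_{n-1}-x_{n-2}$, which is the first norm on the right of \eqref{theorem-result}; for the $\gamma h^2$ piece, $C+E=x_n-x_{n-1}+x-x_{n+2}$, which is the fourth norm; and for the $\beta h$ piece the complementary identity produces $\|x_{n+1}-2x_n+x_{n-1}\|^2$. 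Regrouping the $\|x-x_{n+2}\|^2$ contributions from the three expansions yields the left-hand coefficient $\alpha-\beta h+\gamma h^2$, and any leftover non-negative square (such as $\|B-E\|^2$) is discarded on the side where it only strengthens the bound.

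The main obstacle I expect is twofold. First, the feasibility hypotheses $x\in\KK(x_n)$ and $x_{n+2}\in\KK(x)$ required to substitute $y=x$ into \eqref{MVI-alg} and $y=x_{n+2}$ into \eqref{QVI} are not automatic; Assumption~1 is precisely tailored to this but must be invoked carefully. Second, the sign bookkeeping in the polarization step is delicate: the three identities must be oriented independently so that all three specific vectors in \eqref{theorem-result} appear with the correct signs simultaneously, and so that the unwanted residual square lands on the side that permits it to be dropped without reversing the inequality.
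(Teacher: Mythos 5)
Your overall route --- test \eqref{MVI-alg} at $y=x$, test \eqref{QVI} at $y=x_{n+2}$, add the two inequalities, cancel the operator terms by monotonicity, and then expand $\la V,\,x-x_{n+2}\ra\ge 0$ with the identities $2\la u,v\ra=\|u+v\|^2-\|u\|^2-\|v\|^2$ and $2\la u,v\ra=\|u\|^2+\|v\|^2-\|u-v\|^2$ --- is the standard argument for results of this type and is the one the paper implicitly relies on (the paper gives no proof of its own, only the citation to \cite{Hazaimah-third-mvi}). Your identification of $V$, of the vectors $A+E$ and $C+E$, and of the discarded square $\|B-E\|^2$ are all correct, and your two caveats (feasibility of the cross-substitutions, and the orientation of the polarization identities) are the right things to worry about.

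There is, however, a concrete arithmetic gap at the last step. With $V=\alpha A+2\beta h B+2\gamma h^2 C$ (which is indeed the numerator in \eqref{MVI-alg}), the inequality $\la V,E\ra\ge 0$ reads $\alpha\la A,E\ra+2\beta h\la B,E\ra+2\gamma h^2\la C,E\ra\ge 0$, and polarizing each term yields
\[
\Bigl(\tfrac{\alpha}{2}-\beta h+\gamma h^2\Bigr)\|E\|^2\le\tfrac{\alpha}{2}\|A+E\|^2-\tfrac{\alpha}{2}\|A\|^2+\beta h\|B\|^2+\gamma h^2\|C+E\|^2-\gamma h^2\|C\|^2,
\]
i.e.\ every occurrence of $\alpha$ in \eqref{theorem-result} comes out as $\alpha/2$. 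Landing exactly on \eqref{theorem-result} would require $\la\alpha A+\beta h B+\gamma h^2 C,\,E\ra\ge 0$, which \eqref{MVI-alg} does not deliver under any single rescaling: multiplying \eqref{discrete} through by $2h^3$ gives the coefficient triple $(\alpha,\,2\beta h,\,2\gamma h^2)$, and by $h^3$ gives $(\alpha/2,\,\beta h,\,\gamma h^2)$. The mismatch originates in the $2h^3$ denominator of the central third difference versus the $h^2$ and $h$ denominators of the other terms, so it is arguably a normalization defect inherited from the statement itself; but as written, your claim that the regrouping ``yields the left-hand coefficient $\alpha-\beta h+\gamma h^2$'' does not follow from your own $V$. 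Separately, note that Assumption~1 is not among the hypotheses of this theorem (only of the subsequent convergence theorem), and even where it is assumed it is a set-convergence condition that does not by itself give $x\in\KK(x_n)$ or $x_{n+2}\in\KK(x)$; those feasibility facts, which your first substitution and your use of \eqref{QVI} both require, remain unjustified and would need to be added as explicit hypotheses.
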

\begin{proof}
See \cite{Hazaimah-third-mvi}. 
\end{proof}

\begin{theorem}
Let $x\in\KK(x)$ be the solution of \eqref{QVI}. Let $x_{n+2}$ be the approximate solution of Algorithm \ref{Algorithm1}, Suppose that the operator $T$ is monotone and Assumption 1 satisfies, then the generated sequence from \eqref{MVI-alg} converges to the solution $x$ of the parametric quasi variational inequality \eqref{QVI}
, i.e., 
$\displaystyle\lim_{n\to\infty}x_{n+2}=x.$
\end{theorem}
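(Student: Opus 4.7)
The plan is to use Theorem~\ref{nonincreasing} as a Lyapunov-type descent inequality, extract a cluster point of the iterates, and then pass to the limit in the variational form \eqref{MVI-alg} using Assumption~1 and continuity of $T$.

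First, I would rearrange inequality \eqref{theorem-result}. The differences $\alpha\|x - 2x_{n+1} + 2x_{n-1} - x_{n-2}\|^2 - \alpha\|x_{n+2} - 2x_{n+1} + 2x_{n-1} - x_{n-2}\|^2$ and $\gamma h^2\|x_n - x_{n-1} + x - x_{n+2}\|^2 - \gamma h^2\|x_n - x_{n-1}\|^2$ can be expanded via the polarization identity as linear combinations of the quantities $\|x - x_{n+j}\|^2$ for $j\in\{-2,-1,0,1,2\}$ plus cross terms that vanish when increments vanish. Under the coefficient hypothesis $\alpha - \beta h + \gamma h^2 > 0$ together with the standing smallness conditions on $h,\beta,\gamma$, I would assemble these into a Lyapunov functional of the form $V_n = a_0\|x - x_{n+2}\|^2 + \cdots + a_4\|x - x_{n-2}\|^2$ with nonnegative weights $a_0,\ldots,a_4$, satisfying $V_{n+1}\leq V_n - \eta_n$ for a nonnegative residual $\eta_n$ involving $\|x_{n+1}-2x_n+x_{n-1}\|^2$ and $\|x_n - x_{n-1}\|^2$. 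Telescoping this inequality would yield boundedness of $\{x_n\}$, summability $\sum_n \eta_n < \infty$, and in particular $\|x_{n+1}-x_n\|\to 0$ and $\|x_{n+1}-2x_n+x_{n-1}\|\to 0$.

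Second, because $\{x_n\}$ is bounded I would extract a convergent subsequence $x_{n_k}\to\bar x$; the vanishing increments from Step~1 guarantee that $x_{n_k-2},x_{n_k-1},x_{n_k+1},x_{n_k+2}$ all share the same limit $\bar x$. I would then pass to the limit in \eqref{MVI-alg}: the finite-difference coefficient multiplying $(y - x_{n+2})$ tends to $0$, leaving $\langle \lambda T(\bar x),\,y-\bar x\rangle \geq 0$. To make $y$ range over arbitrary points of $\KK(\bar x)$, I would invoke Assumption~1 to produce $y_{n_k}\in\KK(x_{n_k})$ with $y_{n_k}\to y$, substitute these into \eqref{MVI-alg}, and use continuity of $T$ to pass to the limit. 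The second clause of Assumption~1 also guarantees $\bar x \in\KK(\bar x)$. Hence $\bar x$ solves \eqref{QVI}.

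Finally, I would combine the Fej\'er-type inequality produced in Step~1, re-applied with the limit $\bar x$ in place of $x$, with the fact that a subsequence converges to $\bar x$ to upgrade subsequential convergence to full-sequence convergence; since $x$ is the solution hypothesized in the statement, either uniqueness under the structural assumptions or the Fej\'er argument itself identifies $\bar x$ with $x$, giving $\lim_{n\to\infty}x_{n+2}=x$. The main obstacle I expect is Step~1: inequality \eqref{theorem-result} couples five consecutive iterates and is not manifestly telescopic, so constructing a true Lyapunov functional and checking nonnegativity of its weights under the prescribed relations among $\alpha,\beta,\gamma,h$ requires careful algebraic bookkeeping. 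A secondary difficulty arises in Step~2, where the moving constraint $\KK(x_n)$ blocks any direct limit argument on the projection; Assumption~1 is precisely the hypothesis tailored to resolve this point by allowing admissible test vectors $y_{n_k}\in\KK(x_{n_k})$ to approximate an arbitrary $y\in\KK(\bar x)$.
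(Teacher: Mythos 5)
Your overall strategy---use Theorem~\ref{nonincreasing} as a descent estimate, extract a cluster point from boundedness and vanishing residuals, pass to the limit in \eqref{MVI-alg} via Assumption~1, and then upgrade to full convergence by re-applying the estimate with the cluster point in place of $x$---is the standard route for this family of results, and it is what the paper intends (the paper itself supplies no argument here; it defers entirely to the citation \cite{Hazaimah-third-mvi}). Your Step~2 and Step~3 are sound as described, and your observation that Assumption~1 is exactly what is needed to handle the moving constraint set when passing to the limit is correct.

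The genuine gap is in Step~1, and you have in effect flagged it yourself without closing it. Inequality \eqref{theorem-result} is not a Fej\'er inequality and does not telescope as written: the left-hand side controls $\|x-x_{n+2}\|^2$, but the right-hand side pairs $\alpha\|x-(2x_{n+1}-2x_{n-1}+x_{n-2})\|^2$ against $\alpha\|x_{n+2}-(2x_{n+1}-2x_{n-1}+x_{n-2})\|^2$, which is a difference of two \emph{different} quadratic forms evaluated at the same shifted center, not consecutive values of one functional; similarly $\gamma h^2\|x_n-x_{n-1}+x-x_{n+2}\|^2-\gamma h^2\|x_n-x_{n-1}\|^2$ expands to $\gamma h^2\|x-x_{n+2}\|^2$ plus a cross term, which partially cancels the left-hand side rather than dissipating anything. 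Worse, the term $\beta h\|x_{n+1}-2x_n+x_{n-1}\|^2$ enters the right-hand side with a \emph{positive} sign, i.e.\ as an additive error rather than as a negative residual, so it cannot serve as the summable dissipation $\eta_n$ you need for $\|x_{n+1}-2x_n+x_{n-1}\|\to 0$. Your claim that a Lyapunov functional $V_n=a_0\|x-x_{n+2}\|^2+\cdots+a_4\|x-x_{n-2}\|^2$ with nonnegative weights and $V_{n+1}\le V_n-\eta_n$ can be ``assembled'' is therefore an assertion, not a proof: whether such weights exist depends delicately on sign conditions among $\alpha,\beta,\gamma,h$ that go beyond $\alpha-\beta h+\gamma h^2>0$, and without this construction you obtain neither boundedness of $\{x_n\}$ nor the vanishing increments on which Steps~2 and~3 entirely rest. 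Until that algebra is carried out explicitly (or the descent inequality is re-derived in a genuinely telescoping form), the argument is incomplete at its foundation.
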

\begin{proof}
See \cite{Hazaimah-third-mvi}.
\end{proof}

\section{Applications}
We investigate two scenarios. The first one is the obstacle problem which is a class of free boundary problems that observe the dynamic behavior of a state variable described by a differential equation and studies the equilibrium states over an obstacle with fixed boundary conditions arise in financial mathematics and optimal control. The second one is the generalized Nash equilibrium problem which is an extension of the classical Nash equilibrium problem, in which each player’s strategy set depends on the rival player’s strategies.
\subsection{The obstacle boundary value}
Consider the second-order implicit obstacle boundary value problem, which have been discussed in
Noor \cite{Noor-some-qvi} as finding \(x\) such that 
\[ \left\{ \begin{array}{ll}
-\ddot{x}(t)\geq f(t) & \text{on}\; \Omega=[a,b]\\
x(t)\geq M(x(t)) & \text{on}\; \Omega=[a,b]\\
\big(-\ddot{x}(t)-f(t)\big)\big(x(t)-M(x(t))\big)=0 &  \text{on}\;  \Omega=[a,b]\\ 
x(a)=0=x(b),
\end{array}\right.\]
where \(f(t)\) is a continuous function and \(M(x(t))\) is the following cost function
\[M(x(t))=k+\inf_{i}x^i , \ \ \ \text{where} \ k \geq 0.\]
To see the connection between the obstacle problem and the quasi variational inequalities, we define the constraint closed convex-valued set 
\[\KK(x)=\{y:y\in\HH_0^1(\Omega), \ \ y\geq M(x)\}\]
where \(\HH_0^1(\Omega)\) is a Sobolev space. Introduce the energy functional corresponding to the obstacle problem as:
\[I[y]=\la Tx,y\ra-2\la f,y\ra, \ \ \forall y\in\KK(x)\]
where 
\[\la Tx,y\ra =\int_a^b \bigg(\frac{dy}{dt}\bigg)^2dt, \ \ \text{and} \ \ \la f,y\ra=\int_a^bf(t)y\ dt.\]

It is clear that the operator $T$ defined above is linear, symmetric and positive. Using the
technique of Noor \cite{newtrends-gvi} one can show that the minimum of the
functional $I$ associated with the problem (2) on the closed convex-valued
set $\KK(x)$ can be characterized by the inequality 
\[\la Tx,y-x\ra\geq\la f,y-x\ra, \ \ \forall y\in\KK(x).\]

which is exactly the quasi variational inequality \eqref{QVI}.

\subsection{Generalized Nash equilibrium problems}
The main concept in game theory is the Nash equilibrium. A Nash equilibrium is a set of strategies (constraints) assigned to each member (player) of the game. In other words, it is one such that no constraints across players are allowed. 
It is known that variational inequalities or variational equilibria can be extended to the Nash equilibria game. In the same fashion, generalized Nash equilibrium problems can be formulated in terms of QVIs due to Bensoussan \cite{Bensoussan-etal}, where the author dealt with infinite-dimensional strategy sets in which not only each player’s payoff function but also their strategy set depend on the other players strategies. Necessary and sufficient conditions for Nash equilibria of a game in normal strategic form cane be constructed in terms of the generated optimization problems. 

\textbf{Noncooperative Games in normal strategic form:}
Roughly speaking, when the feasible set of the game is actually the full Cartesian product of the individual strategy sets then the composed game is called a noncooperative game. In other words, players can only impact the cost functions of the rival players but not their feasible sets.
Consider a finite set \(I=\{1, ...,n\}\) of players such that each player \(i\in I\) has a set of strategies (also called actions or constraints) denoted by \(S_i\), and this set is a compact convex subset of a Hilbert space \(\HH_i\) defined by the set-valued mapping \(S_i:\HH\setminus\HH_i\rightrightarrows\HH_i\) where \(\HH=\prod_{i\in I}\HH_i\) is the ambient space. Each player $i$ controls their decision variable \(x^i\in\HH_i\) such that the vector $x=(x^1,..., x^n)\in\HH$ describes the decision vector of all players. We often use the notation $x=(x^i, x^{–i})$, where $x^{–i}=(x^j)^{j\in I\setminus\{i\}}=(x^1, x^2,..., x^{i–1}, x^{i+1},..., x^n).$ Let 
\[\KK(x):=\displaystyle\prod_{i=1}^{n}S_i(x^{-i}),\]
represents the full Cartesian product of the strategy sets. Furthermore, every player $i$ has
a convex smooth utility (payoff) function  \(U_i:\HH\to\RR\) defined as
\[U_i=\displaystyle\prod_{i\in I}S_i\]
Then it is well known that GNEP consists in finding a vector \(x^*=(x^{*1},...,x^{*n})\in\KK(x^*)\) such that 
\begin{equation}\label{gnep-qvi-vector}
\la Tx^*,x-x^*\ra\geq 0, \ \ \forall x\in\KK(x^*),
\end{equation}
where $T$ is a vector-valued function defined as 
\[T(x)=\nabla_{x^i}U_i(x^i,x^{-i}), \ \ \forall \ 1\leq i\leq n.\] 
The goal of each player is to minimize utility with respect to the only variable \(x^i\) under their control 
\begin{equation}\label{min-utility}
\min_{x^i\in\KK} U_i(x^i,x^{-i})
\end{equation}
where \(\KK\subseteq\prod_{i\in I} S_i\). A point \(x^*\in\KK\) is said to be a variational equilibrium of a game \((U,\KK)\) if 
\begin{equation}\label{variational-equilibrium}
\la\nabla U(x^*),x-x^*\ra\geq0,\ \ \forall x\in\KK.
\end{equation}
All the previous discussed formulas in this section can be reconstructed for the utility maximization problem by replacing min with max in \eqref{min-utility} such that the vector $T$ has a line search align with the anti-gradient direction on the contrary of the gradient operator direction. i.e., the maximization problem can be slightly stated in the standard form \eqref{gnep-qvi-vector}.

\begin{definition}
We say that \(x^*\in\KK\) is a Nash equilibrium of a game $(U,\KK)$ if for every player $i\in I$, 
\[U_i(x^{i*},x^{-i*})\leq U_i(x^i,x^{-i*}), \ \ \forall (x^i,x^{-i*})\in\KK.\]
Moreover, if $\KK=\displaystyle\prod_{i\in I}S_i$, we therefore have \[U_i(x^{i*},x^{-i*})=\min_{x^i\in S_i}U_i(x^i,x^{-i*}).\]
\end{definition}

\begin{theorem}[\cite{Cavazzuti}]
Let \((U,\KK)\) be a noncooperative game and $x^*$ is a Nash equilibrium then 
\begin{equation}\label{qvi-gneg}
\la\nabla U(x^*),x-x^*\ra\geq0, \quad \forall x\in x^*+I_\KK(x^*)\cap\KK.
\end{equation}
\end{theorem}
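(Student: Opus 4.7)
The plan is to reduce the joint variational inequality to the per--player first--order optimality conditions that a Nash equilibrium must satisfy, and then reassemble them using the product structure of $\KK$.

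First I would unpack the Nash equilibrium hypothesis via the definition stated just before the theorem: for every player $i \in I$ and every admissible $x^i$ (so that $(x^i,x^{-i*})\in\KK$) one has
\[
U_i(x^{i*},x^{-i*}) \leq U_i(x^{i},x^{-i*}).
\]
Because $\KK$ is assumed to be a noncooperative (Cartesian--product) feasible set and each $U_i(\cdot,x^{-i*})$ is convex and smooth (as stated in the setup of the subsection), $x^{i*}$ is a global minimizer of $U_i(\cdot,x^{-i*})$ over the slice of $\KK$ obtained by fixing $x^{-i*}$. Invoking the first theorem of the Preliminaries (the first--order necessary optimality condition for a smooth convex function), this gives, for each $i$,
\[
\la \nabla_{x^{i}} U_i(x^{i*},x^{-i*}),\; y^{i}-x^{i*}\ra \geq 0,
\]
for every $y^{i}$ feasible from $x^{i*}$ in player $i$'s slice.

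Next I would lift these $n$ scalar inequalities to a single joint inequality on $\HH = \prod_i \HH_i$. Take any $x \in x^* + I_\KK(x^*) \cap \KK$; by definition of the feasible--direction cone $I_\KK(x^*)$ together with $x \in \KK$, the direction $x - x^* = (x^{1}-x^{1*},\dots,x^{n}-x^{n*})$ decomposes coordinatewise into directions that are feasible for each player from $x^{i*}$ (this is exactly the point at which the Cartesian structure $\KK = \prod_{i} S_i$ is used, so that feasibility is separable across players). Summing the per--player inequalities over $i = 1,\dots,n$ yields
\[
\sum_{i=1}^{n} \la \nabla_{x^{i}} U_i(x^{i*},x^{-i*}),\; x^{i} - x^{i*}\ra \geq 0,
\]
and the left--hand side is precisely $\la \nabla U(x^*), x - x^* \ra$, where $\nabla U = T$ is the pseudo--gradient operator introduced in equation~\eqref{gnep-qvi-vector} of the subsection. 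This establishes~\eqref{qvi-gneg}.

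The main obstacle I would expect is the bookkeeping around the cone $I_\KK(x^*)$: one must check that intersecting with $\KK$ ensures the representatives $x^{i}-x^{i*}$ are not merely abstract feasible directions but in fact point to admissible strategies for player $i$ against $x^{-i*}$, so that the Nash inequality can be applied to each $x^{i}$. For the noncooperative case this is automatic from the product form $\KK = \prod_{i} S_i$ (since the slices factor), but any deviation from exact Cartesian structure would force a more delicate tangential argument. Once that decomposition is secured, the remainder is a direct addition of the per--player first--order conditions and is purely mechanical.
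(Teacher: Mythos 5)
The paper does not actually prove this statement: it is quoted from \cite{Cavazzuti} and used as a black box, so there is no internal proof to compare yours against. Judged on its own, your argument is the standard one and is essentially sound for the noncooperative case. Two remarks. First, you do not need convexity of the $U_i$: since each $S_i$ is convex, the per-player inequality $\la\nabla_{x^i}U_i(x^{i*},x^{-i*}),\,y^i-x^{i*}\ra\geq 0$ follows from the directional-derivative limit $t\to 0^+$ applied to $U_i(x^{i*}+t(y^i-x^{i*}),x^{-i*})\geq U_i(x^{i*},x^{-i*})$; the ``first theorem of the Preliminaries'' you invoke is an equivalence for convex objectives, but only the (convexity-free) necessary direction is needed here. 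Second, your argument really establishes the inequality for \emph{every} $x\in\KK$, which is stronger than the stated conclusion when $\KK=\prod_i S_i$ is convex; the internal cone $I_\KK(x^*)$ in the statement is there to cover possibly nonconvex feasible sets, and under the reading $x\in x^*+\bigl(I_\KK(x^*)\cap\KK\bigr)$ the point $x$ need not itself lie in $\KK$, so the per-player Nash inequality cannot be applied to $x^i$ directly. In that case you must route through the definition of an internal direction: for $d=x-x^*\in I_\KK(x^*)$ there is $\bar t>0$ with $x^*+td\in\KK$ for all $t\in(0,\bar t)$, you apply your componentwise argument at $x^*+td$, divide by $t$, and pass to the limit. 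You flag exactly this bookkeeping issue in your closing paragraph but resolve it only for the reading in which $x\in\KK$; adding the one-line limiting argument closes the remaining case. With that amendment the proof is complete, and the identification of $\nabla U(x^*)$ with the pseudo-gradient $T(x^*)=(\nabla_{x^i}U_i(x^*))_i$ is the correct way to read the paper's (slightly abusive) notation.
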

This is called a necessary optimality condition for QVI \eqref{QVI} in the form of Stampacchia-type \cite{Stampacchia} for a noncooperative game. Also, this QVI \eqref{qvi-gneg} is equivalent to the projection equation:
\[\Pi_{I_K(x^*)}(-\nabla U(x^*))=0,\]
where \(\Pi_{I_K(x^*)}\) denotes the orthogonal projection onto the internal cone \(I_K(x^*)\) such that its associated dynamical system is defined by 
\begin{equation}\label{proj-internal-cone}
\dot{x}(t)=\Pi_{I_K(x)}(-\nabla U(x)).
\end{equation}
Such projected dynamical system describes the evolution of the game from a nonstationary initial point. 
These dynamical systems are of great importance due to their geometrical explanation. Since the antigradient direction $-\nabla U(x)$ offers the players the steepest cost if $x$ stays on the boundary of the feasible set $K$, thus projecting this direction could be performed in several ways including \eqref{proj-internal-cone}. 
A different dynamical system is to exploit the projection on the tangent cone of $K$, that is  
\[\dot{x}(t)=\Pi_{T_K(x)}(-\nabla U(x)).\]
Another dynamical system is attained by performing the projection on the whole set $K$, namely,
\[\dot{x}(t)=\Pi_K(x-\alpha\nabla U(x))-x,\]
where $\alpha$ is a fixed positive constant. Steady states of the latter two projected systems coincide with variational equilibria of the game \eqref{variational-equilibrium}. However, a Nash equilibrium is not necessarily a steady state.
In this case we remark that its steady states coincide with the solutions to the quasi variational inequality \eqref{qvi-gneg}, and consequently, equal Nash equilibria of the game.

\section{Conclusion}
In this paper we proposed a new high order projected dynamical system for solving parametric quasi vairiational inequalities. This approach is novel and attribute new algorithms which can be considered interchangeably as continuous-time versions and discrete-time counterparts iterative schemes. It can be expected that the techniques described in this paper will be useful for more elaborate dynamical models, such as stochastic models, and that the connection between such dynamical models and the solutions to quasi variational inequalities will provide a deeper understanding of generalized equilibrium problems for nonconvex scenarios. Equilibrium solutions (stationary points or trajectories) of the associated dynamical system converge to the solutions of the parametric variational inequality problems by the equivalent formulation of fixed points problems and variational inequalities. The proposed implicit and explicit algorithms may be extended for a broader class of generalized quasi equilibrium problems and even beyond the convexity scope to nonconvex equilibrium variational problems.
The stability analysis of the novel dynamical system technique has been investigated. This approach usually provides qualitative behaviour of the system around the equilibrium points. One of the advantages of this approach is studying changes over time for energy-like functions without solving the differential equation analytically.
Even the applicability and leverage of the approaches into real-applications, combining third-order dynamics into quasi variational inequalities still carries various challenges due to the computational complexity when proposing composite optimization algorithms for solving such systems. 
\medskip 

Future research directions may focus on developing efficient algorithms, integrating machine learning techniques for parameter estimation, and extending the framework to stochastic environments and/or to nonmonotone manners whether on operators or in line searches for linearly convergence of algorithms. Another direction left to the future research is through exploring the dynamic constraint convex-valued set for different formats and layouts including two parameters or linear operators with mild conditions on symmetric matrices. 
\medskip

Finally, since QVI can be used to formulate the generalized Nash game in which not only each player’s
payoff function but also their strategy set depend on the other players strategies, the QVI can attract ongoing attention to game theory. Merit functions such as the gap function is a powerful tool in the equivalent optimization formulation of the variational inequality. Since gap functions possess smooth properties when the constraints are represented by nonlinear inequalities, thus constructing gap functions for the QVI is a future research aim. From the viewpoint of application, it is essentially beneficial to study generalized Nash games that use gap functions, or more generally, merit functions and this would also be worth investigation.


\medskip




\end{document}